\newcommand{\Ff}{\mathcal F}
 \newcommand{\Cc}{\mathcal C}
 \newcommand{\M}{\operatorname{M}}
 \newcommand{\Var}{\operatorname{var}}
\newcommand{\<}{\left<}
\renewcommand{\>}{\right>}
\newcommand{\Pp}{\mathcal{P}}
 \newcommand{\Bb}{\mathcal B}
 \newcommand{\RR}{\mathbf{R}}  
\newcommand{\ee}{\mathbf e}
\newcommand{\Hh}{\mathcal{H}}
\def\begfig {
\begin{figure}
\small }
\def\endfig {
\normalsize
\end{figure}
}
    \newtheorem{theorem}    {Theorem}   
    \newtheorem{corollary}  [theorem]     {Corollary}
    \newtheorem*{theorem*}{Theorem}
    \theoremstyle{definition}
    \theoremstyle{definition}
\title{Stationary polyhedral varifolds minimize area}
\author{Brian White}
\thanks{The author was partially supported by NSF grant DMS-1711293}
\address{Department of Mathematics\\ Stanford University\\ Stanford, CA 94305}
\email{bcwhite@stanford.edu}
\date{November 29, 2019.  Revised January 11, 2020.}
\subjclass[2010]{49Q20 (primary), and 53C38, 49Q05 (secondary)} 
\begin{document}

\maketitle

\begin{abstract}
We prove that every stationary polyhedral varifold minimizes area in the
following senses:
 (1) its area cannot be decreased by
a one-to-one Lipschitz ambient deformation that coincides with the identity
outside of a compact set, and 
 (2) it is the varifold associated to a mass-minimizing flat chain with coefficients
in a certain metric abelian group.
\end{abstract}

NOTE: After this paper was written, I learned that
(1) above was proved by Choe~\cite{choe}
and that (2) was proved by Morgan~\cite{morgan}.
Hence this note should be viewed as an exposition of their results.
(Perhaps the result in~\S7 is new.)

\section{Introduction}

The tangent cone to any $2$-dimensional minimal variety (i.e., stationary varifold with a positive
   lower bound on density)
is polyhedral  (because the link is a stationary $1$-varifold in the unit sphere, 
and thus is a finite union of geodesic arcs~\cite{allard-almgren}.)
 If the variety is area-minimizing in some sense, then the cone is also area-minimizing.
Thus it is natural to ask: which $2$-dimensional stationary cones, or, more generally, which $m$-dimensional
stationary polyhedral cones, are area-minimizing in some sense?

In her celebrated 1976 paper~\cite{taylor-soap}
about soap films, Jean Taylor classified all two-dimensional, multiplicity-one
 polyhedral cones
in $\RR^3$ that minimize area in the following sense: the area (without counting multiplicity) cannot be 
decreased by a compactly supported Lipschitz deformation.  The Lipschitz deformation need not be one-to-one.
She showed that there are only three such cones: the plane, the union of three halflplanes meeting at equal angles
along their common edge, and the cone over a regular tetrahedron.
However, there are many other stationary polyhedral cones.  For example, in $\RR^3$, there
are seven other $2$-dimensional, multiplicity-one stationary cones that have, away from the vertex, only triple-junction-type singularities~\cite{taylor-soap}*{pp.~501, 502}.
Taylor's theorem leaves open the possibility that those other cones could be area-minimizing in
some other sense.   For example, one could ask
\begin{enumerate}
\item Which stationary polyhedral cones minimize area with respect to 
deformations by compactly supported Lipschitz homeomorphisms of the ambient space?
\item Which stationary polyhedral cones can be assigned orientations and multiplicities
in some coefficient group in such that a way that the cone is mass-minimizing as a flat chain
for that coefficent group?
\end{enumerate}
In this paper, we show that {\bf all} stationary polyhedral cones have both of those properties.
Indeed, the properties hold for every stationary polyhedral varifold, whether or not conical.

In particular, we prove

\begin{theorem}[Choe~\cite{choe}]\label{ONE}
Suppose that $V$ is an $m$-dimensional, rectifiable varifold in an open subset $U$ of $\RR^N$.
Suppose that $V$ is stationary, that $V$ is supported in a finite union of affine $m$-planes,
and that $V$ has finite mass.
Then 
\[
    \M(V) \le \M(f_\#V)
\]
for every Lipschitz homeomorphism $f: U\to U$ such that $\{x: \phi(x)\ne x\}$ has compact closure in $U$.
\end{theorem}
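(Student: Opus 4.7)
The plan is to reduce $\M(V)\le\M(f_\# V)$ to a calibration argument on the natural chamber decomposition of $\spt V$, using stationarity to supply the compatibility needed for the calibration to exist.

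\textbf{Chamber decomposition and reduction.} Since $\spt V$ lies in finitely many affine $m$-planes $P_1,\dots,P_k$, the set $\bigl(\bigcup_i P_i\bigr)\setminus\bigcup_{i<j}(P_i\cap P_j)$ is a finite disjoint union of open connected $m$-polyhedral \emph{chambers}, each lying in a unique $P_j$. On each chamber $C_\alpha\subset P_{i(\alpha)}$ the density of $V$ equals a constant $\theta_\alpha\ge 0$: locally away from the lower-dimensional strata $V$ is a stationary $m$-varifold inside a single affine $m$-plane, so the constancy theorem applies. By the area formula, applied to the Lipschitz homeomorphism $f$ restricted to each chamber,
\[
    \M(V)=\sum_\alpha \theta_\alpha\,\Hh^m(C_\alpha), \qquad \M(f_\# V)=\sum_\alpha \theta_\alpha\int_{C_\alpha} J^m(f|_{P_{i(\alpha)}})\,d\Hh^m,
\]
so the theorem reduces to $\sum_\alpha \theta_\alpha\,\Hh^m(C_\alpha)\le \sum_\alpha \theta_\alpha\int_{C_\alpha} J^m(f|_{P_{i(\alpha)}})\,d\Hh^m$.

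\textbf{Balance from stationarity, then calibration.} Testing the first variation formula against smooth vector fields supported near a single $(m-1)$-face $e$ and tangent to $e$, stationarity reduces to the force balance
\[
    \sum_{k=1}^r \theta_{\alpha_k}\nu_k=0
\]
at each such face, where $C_{\alpha_1},\dots,C_{\alpha_r}$ are the chambers meeting along $e$ and $\nu_k\in P_{i(\alpha_k)}$ is the inward unit conormal to $e$ in $C_{\alpha_k}$. Next I would package the data $\{(\theta_\alpha,C_\alpha,\xi_{i(\alpha)})\}$, for a choice of unit simple $m$-vector $\xi_j$ orienting each $P_j$, as a flat chain $T$ with coefficients in a normed abelian group $G$ large enough to keep contributions of different chambers distinguishable (e.g.\ one $\RR$-factor per chamber, with norm so chosen that $|\theta_\alpha e_\alpha|_G=\theta_\alpha$). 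The balances above are precisely the cycle condition $\partial T=0$. Producing a $G^\ast$-valued calibration $\omega$ — closed, comass at most $1$, and pairing with the oriented tangent of $C_\alpha$ to give the coefficient $\theta_\alpha$ — would finish the proof: since $f$ equals the identity outside a compact set, $f_\# T-T$ is a boundary, so
\[
    \M(V)=\M(T)=T(\omega)=(f_\# T)(\omega)\le \M(f_\# T)\le \M(f_\# V).
\]

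\textbf{Main obstacle.} The crux is constructing $\omega$. A naive attempt with $G=\RR$ fails whenever the polyhedral support is non-orientable as an integral current (e.g.\ the cone over the $1$-skeleton of the tetrahedron), forcing one to enlarge $G$ and to build $\omega$ by hand near each $(m-1)$-face using the edge balance as the dual of the cocycle condition $d\omega=0$. Verifying that these local calibrations glue to a global closed $G^\ast$-valued form of comass at most $1$ — equivalently, that the coefficient group and cycle associated to every stationary polyhedral varifold always admit a calibration — is the technical heart of the argument.
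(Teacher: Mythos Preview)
Your overall architecture matches the paper's: associate to $V$ a chain $T$ in some coefficient group so that stationarity becomes $\partial T=0$, find a calibration, and compare $T$ with $f_\#T$. But the realization you propose has a genuine gap, and the ``main obstacle'' you flag is in fact a non-issue once the right coefficient group is chosen.

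With $G=\bigoplus_\alpha\RR$ (one factor per chamber) and $T=\sum_\alpha \theta_\alpha e_\alpha[C_\alpha]$, the boundary at an $(m-1)$-face $e$ shared by $C_{\alpha_1},\dots,C_{\alpha_r}$ is $\bigl(\sum_k \pm\theta_{\alpha_k}e_{\alpha_k}\bigr)[e]$. Since the $e_{\alpha_k}$ are linearly independent, this vanishes only when every $\theta_{\alpha_k}=0$. So the balance $\sum_k\theta_{\alpha_k}\nu_k=0$ is \emph{not} the cycle condition for your $T$; in general $\partial T\ne 0$ and the calibration step never gets off the ground. The difficulty you anticipate in gluing local calibrations is a symptom of this mismatch, not a separate technical hurdle.

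The paper's move is to take $G=\Lambda_m\RR^N$ with the Euclidean norm and to assign to each oriented chamber $C_\alpha$ the coefficient $\theta_\alpha\,\eta_\alpha$, where $\eta_\alpha$ is the unit simple $m$-vector tangent to $C_\alpha$. Now the boundary coefficient along a face $e$ with unit $(m-1)$-vector $\eta(e)$ is
\[
\sum_k \theta_{\alpha_k}\eta_{\alpha_k}
=\eta(e)\wedge\Bigl(\sum_k\theta_{\alpha_k}\nu_k\Bigr),
\]
so $\partial T=0$ is \emph{exactly} the stationarity balance. And there is a single, global, tautological calibration: $\Phi\bigl(\sum g_i[\sigma_i]\bigr)=\sum (g_i\cdot\eta(\sigma_i))\,\Hh^m(\sigma_i)$. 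Cauchy--Schwarz gives $\Phi\le\M$ with equality on $T$ by construction, and $\Phi(\partial Q)=0$ is just Stokes for constant forms. No gluing, no local-to-global patching, no comass estimate beyond $|g\cdot\eta|\le|g|$. The rest of your argument (injectivity of $f$ giving $\M(f_\#T)=\M(f_\#V)$, compact support giving $f_\#T-T=\partial Q$) then goes through as you wrote it.
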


Here $\M(V)$ is the total mass of the varifold $V$.  (Thus $\M(V)$ is $\mu_V(\RR^N)$
 in the notation of~\cite{simon-book}
or $\|V\|(\RR^N)$ in the notation of~\cite{allard}.)

Theorem~\ref{ONE} is a consequence of the following:

\begin{theorem}[Morgan~\cite{morgan}]\label{TWO}
Suppose that $\Gamma$ is a closed subset of $\RR^N$ consisting of a finite union of $(m-1)$-dimensional
polyhedra.
Suppose that $V$ is an $m$-dimensional, compactly supported, rectifiable varifold in $\RR^N$ such that
\begin{enumerate}
\item $V$ is supported in a finite union of affine $m$-planes, and
\item $V$ is stationary in $\RR^N\setminus \Gamma$.
\end{enumerate}
Then there is a metric abelian group $G$ and a polyhedral $m$-chain $A$ with coefficients in $G$ such that
\begin{enumerate}
\item $V$ is the varifold associated to $A$.
\item $\partial A$ is supported in $\Gamma$.
\item $A$ is mass-mininimizing: 
if $A'$ is any other flat $m$-chain (with coefficients in $G$) such that $\partial A'=\partial A$, then $\M(A)\le \M(A')$.
\end{enumerate}
We can choose the coefficient group $G$ to be a certain finite-dimensional Euclidean space 
  (namely $\Lambda_n\RR^N$) with its associated norm.
Alternatively, we can choose $G$ to be a discrete group with the property:
\begin{equation}\label{property}
\text{ $\{g\in G: |g|\le \lambda\}$ is finite for every $\lambda<\infty$}.  \tag{*}
\end{equation}
If the varifold is an integral varifold, then we can also require that $|g|$ is an integer for every $g\in G$.
\end{theorem}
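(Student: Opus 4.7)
The plan is to set up $A$ as a polyhedral chain with vector-valued coefficients in $G:=\Lambda_m\RR^N$ (Euclidean norm) carrying the coefficient $\theta_i\omega_i$ on each polyhedral piece $F_i$ of $\spt V$ (where $\theta_i$ is the density and $\omega_i$ a unit simple $m$-vector orienting $F_i$), and then calibrate $A$ by the constant closed $m$-form $\phi=\sum_I dx^I\otimes e_I$. Boundary vanishing off $\Gamma$ is essentially the stationarity condition, and mass-minimization is a one-line Cauchy--Schwarz/Stokes argument.

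\textbf{Construction of $A$.} The finitely many $m$-planes containing $\spt V$, together with $\Gamma$, subdivide $\spt V$ into finitely many compact polyhedral $m$-faces $F_1,\dots,F_r$. Since a stationary rectifiable $m$-varifold supported in a single $m$-plane on an open set is a constant multiple of $\Hh^m$ there, $V$ has a constant density $\theta_i\ge 0$ on the relative interior of each $F_i$. Putting $A:=\sum_i(\theta_i\omega_i)[F_i]$, we get $|\theta_i\omega_i|=\theta_i$, so the varifold associated to $A$ is $V$ and $\M(A)=\M(V)$.

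\textbf{Boundary support.} Let $e$ be an $(m-1)$-face of the decomposition with $e\not\subset\Gamma$, oriented by a unit simple $(m-1)$-vector $\tau$. For each $F_i$ meeting $e$ let $\nu_i$ be the in-plane unit conormal to $e$ pointing into $F_i$; then $\omega_i=-\epsilon_i(\nu_i\wedge\tau)$ and the induced boundary orientation on $e$ from $F_i$ is $\epsilon_i\tau$, for signs $\epsilon_i=\pm1$. Summing the $G$-valued boundary contributions,
\[
\partial A\big|_e \;=\; \Bigl(\sum_i\epsilon_i\,\theta_i\omega_i\Bigr)[e] \;=\; -\Bigl(\sum_i\theta_i\nu_i\Bigr)\wedge\tau\cdot[e].
\]
Stationarity of $V$ at $e$, automatic since $e\not\subset\Gamma$, is exactly the balancing identity $\sum_i\theta_i\nu_i=0$; hence $\partial A|_e=0$, and $\partial A$ is supported in $\Gamma$.

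\textbf{Calibration and mass-minimization.} Consider the $\Lambda_m\RR^N$-valued constant $m$-form $\phi:=\sum_{|I|=m}dx^I\otimes e_I$ on $\RR^N$. Then $d\phi=0$, and $\phi(\xi)=\xi$ for every $m$-vector $\xi$, so for any unit simple $\xi$ and any $g\in G$, Cauchy--Schwarz yields $|\langle\phi(\xi),g\rangle|=|\langle\xi,g\rangle|\le|g|$ with equality iff $g\parallel\xi$; on $A$ this equality holds pointwise, so $\int_A\phi=\M(A)$. Given any compactly supported finite-mass flat $m$-chain $A'$ over $G$ with $\partial A'=\partial A$, the triviality of $H_m(\RR^N)$ lets us write $A-A'=\partial C$, and then $\int_A\phi-\int_{A'}\phi=\int_C d\phi=0$ by Stokes; combined with $\int_{A'}\phi\le\M(A')$ this gives $\M(A)\le\M(A')$.

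\textbf{Discrete and integral refinements.} For the discrete version take $G'\subset\Lambda_m\RR^N$ to be the subgroup generated by $\{\theta_i\omega_i\}$: as a finitely generated torsion-free abelian group it is $\cong\ZZ^r$, and mass-minimization over $\Lambda_m\RR^N$ immediately implies mass-minimization over $G'$. The Euclidean norm on $G'$ may however be dense in $[0,\infty)$ and so fail $(*)$, so one re-equips the abstract $\ZZ^r$ with a piecewise-linear norm adapted to the generators (for instance a weighted $\ell^1$ norm, which is automatically $\ZZ$-valued when $\theta_i\in\ZZ^+$) that is bounded above by the Euclidean norm so that the calibration inequality survives. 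The most delicate point, and the main obstacle of the whole argument, is choosing this norm so that $\|\theta_i\omega_i\|$ is still exactly $\theta_i$ --- that is, ruling out nontrivial shorter representations arising from linear dependences among the $\omega_i$'s. This should reduce to applying the calibration of the previous step to each single-face chain $(\theta_i\omega_i)[F_i]$.
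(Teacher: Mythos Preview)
Your argument for the case $G=\Lambda_m\RR^N$ (construction of $A$, the balancing computation for $\partial A$, and the Cauchy--Schwarz/Stokes calibration) is correct and is exactly the paper's proof, just phrased in terms of the constant $\Lambda_m\RR^N$-valued $m$-form rather than the homomorphism $\Phi$.

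The discrete refinement, however, has the inequality pointing the wrong way. You propose a new norm on the subgroup $H$ generated by $\{\theta_i\omega_i\}$ that is \emph{bounded above} by the Euclidean norm ``so that the calibration inequality survives.'' But if $|\cdot|_H\le|\cdot|_G$, then $\Phi(A')\le\M_G(A')$ does \emph{not} imply $\Phi(A')\le\M_H(A')$; the calibration bound is lost, not preserved. The paper's fix is the opposite: take
\[
|g|_H:=\min\Bigl\{\textstyle\sum_i|n_i|\,|g_i|_G:\ g=\sum_i n_i g_i\Bigr\},
\]
which by the triangle inequality in $G$ satisfies $|\cdot|_H\ge|\cdot|_G$. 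One then does not rerun the calibration at all: for any competitor $A'$ over $H$ one has $\M_H(A')\ge\M_G(A')\ge\M_G(A)$, and for $A$ itself $\M_H(A)=\M_G(A)$ because each generator $g_i=\theta_i\omega_i$ has $|g_i|_H\le|g_i|_G$ (trivial representation) and $|g_i|_H\ge|g_i|_G$ (the inequality above). So the ``main obstacle'' you flag---ruling out shorter representations of the generators---is disposed of in one line by the triangle inequality in $G$, not by calibration, and the norm $|\cdot|_H$ visibly satisfies property~$(*)$ and is integer-valued when the $\theta_i$ are.
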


(Theorem~\ref{ONE} and Theorem~\ref{TWO} with $G=\Lambda_m\RR^N$ are proved in \S\ref{main-section}.
  Theorem~\ref{changing-theorem} in \S\ref{changing-section} 
shows how one can then construct from $\Lambda_n\RR^N$ a suitable coefficient group that has Property~\eqref{property}.)

For either choice of coefficient group, the resulting space of flat chains satisfies the compactness
theorem: given any sequence of flat chains supported in a compact set with mass and boundary mass bounded above,
there is a subsequence that converges in the flat topology.
However, if $G$ is a normed vector space over $\RR$,
 then there will be finite-mass flat chains with coefficients in $G$
that fail to be rectifiable.
On the other hand, if the coefficient group has Property~\eqref{property}, then every finite mass flat chain is rectifiable.
(These assertions about rectifiablity and lack of rectifiability follow from~\cite{white-rectifiability}*{Theorem~7.1}.)
Thus coefficient groups with Property~\eqref{property}
 provide a nice setting for the Plateau problem: mass minimizing surfaces
(for any boundary) exist, and they must be rectifiable.
This paper shows that every stationary polyhedral cone arises as the varifold associated to a mass-minimizing cone
for such a coefficient group.

This paper is meant to be largely self-contained.
See~\cite{white-fluids} for a brief overview of flat chain with coefficients in a metric abelian group,
or
Fleming's original paper~\cite{fleming} for a thorough treatment.

I would like to thank Giovanni Alberti for stimulating discussions on the topic of this paper.

\section{Abstract Calibrations}

Let $\Cc$ be a collection (e.g., of surfaces), 
let $\M:\Cc\to\RR$ be a real-valued function (e.g., ``mass" or ``weighted
area"), and let $\sim$ be an equivalence relation on $\Cc$ (e.g, the relation ``is homologous to".)
An {\bf abstract calibration} on $(\Cc,M,\sim)$ is a function 
\[
  \Phi: \Cc\to \RR
\]
such that 
\begin{enumerate}
\item $\Phi$ is constant on each equivalence class, and
\item $\Phi(S)\le \M(S)$ for every $S\in\Cc$.
\end{enumerate}
If $\Phi(S)=\M(S)$, we say that $S$ is {\bf calibrated} by $\Phi$.

Abstract calibrations are of interest because of the following trivial theorem:

\begin{theorem}
Suppose that $\Phi:\Cc\to\RR$ is a calibration on $(\Cc,\M,\sim)$
and that $S\in \Cc$ is calibrated by $\Phi$. 
Then $S$ minimizes $\M$ in its equivalence class:
\[
   \M(S)\le \M(S') \quad\text{for all $S' \sim S$}.
\]
Furthermore, if $S'\sim S$, then $S'$ minimizes $\M$ in its equivalence class if and only if $S'$ is also calibrated by
  $\Phi$.
\end{theorem}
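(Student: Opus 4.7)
The statement is labeled ``trivial'' in the text, and indeed the proof should amount to nothing more than chaining the two defining properties of an abstract calibration. So the plan is just to set up that chain and then read off both conclusions from it; I do not expect any real obstacle.

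For the first assertion, I would fix an arbitrary $S' \sim S$ and write the single display
\[
    \M(S') \ge \Phi(S') = \Phi(S) = \M(S),
\]
where the inequality is property~(2) of $\Phi$ applied to $S'$, the first equality is property~(1) (constancy on equivalence classes, using $S'\sim S$), and the second equality uses the hypothesis that $S$ is calibrated. Reading this from left to right gives $\M(S)\le \M(S')$, which is what the first assertion claims.

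For the second assertion, I would note that since $\sim$ is an equivalence relation and $S'\sim S$, the equivalence class of $S'$ is the same as that of $S$, so by the first part its minimum $\M$-value is exactly $\M(S)$. Thus $S'$ minimizes $\M$ in its class if and only if $\M(S')=\M(S)$. In the chain above, the two equalities hold unconditionally, so the only step that can ever be strict is the inequality $\M(S')\ge \Phi(S')$. Hence $\M(S')=\M(S)$ is equivalent to $\M(S')=\Phi(S')$, i.e., to $S'$ being calibrated by $\Phi$. This proves both the ``if'' and ``only if'' directions simultaneously, and completes the argument.
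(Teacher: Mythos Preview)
Your proof is correct and is precisely the one-line calibration argument the paper has in mind; the paper in fact gives no written proof at all (it simply calls the theorem ``trivial''), so there is nothing to compare against beyond noting that your chain $\M(S') \ge \Phi(S') = \Phi(S) = \M(S)$ is the intended reasoning.
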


Now suppose that $\Cc$ is an abelian group and that $\mathcal{B}$ is a subgroup.
(Think of $\Cc$ as a collection of $m$-chains and $\Bb$
as the chains in $\Cc$ that are boundaries of $(m+1)$-chains.)
Then we have the equivalence relation on $\Cc$ given by 
\[
   S \sim S' \quad\text{if and only if}\quad S-S' \in \Bb. 
\]
Now suppose that
\[
  \Phi: \Cc \to\RR
\]
is a homomorphism that vanishes on $\mathcal{B}$.   
Then $\Phi$ is constant on equivalence classes for the equivalence relation~$\sim$.
Thus if $\Phi(\cdot)\le \M(\cdot)$, then $\Phi$ is an {\bf abstract calibration} for $(\Cc,\M, \sim)$.
In that case, we will call $\Phi$ a (generalized) {\bf calibration}.

\section{Polyhedral Chains}

Fix an ambient Euclidean space $\RR^N$ and
 an abelian group $(G,+)$ with a norm $|\cdot|$, i.e., with a function
\[
   |\cdot|: G\to\RR
\]
such that the distance function $\operatorname{d}(g,g'):= |g-g'|$ makes $G$ into a metric space.

A {\bf formal polyhedral $m$-chain} (in $\RR^N$, with coefficients in $G$) is a formal sum
\[
  \sum_{i=1}^k g_i \sigma_i
\]
where $g_i\in G$ and $\sigma_i$ is an oriented $m$-dimensional polyhedron in $E$.
Let $\Pp_m^\textnormal{formal}=\Pp_m^\textnormal{formal}(G)$
 be the abelian group of formal polyhedral $m$-chains in $\RR^N$
with coefficients in $G$.

Consider the equivalence relation on formal polyhedral chains generated by 
\[
    g\sigma \equiv g\sigma' + g \sigma''
\]
if $\sigma'$ and $\sigma''$ are obtained from $\sigma$ by subdivision, and by
\[
   g\sigma \equiv - g\tilde \sigma
\]
is $\tilde \sigma$ is obtained from $\sigma$ by reversing the orientation.

A {\bf polyhedral $m$-chain} is an equivalence class in $\Pp_m^\textnormal{formal}$ under
this equivalence relation.   We let $\Pp_m=\Pp_m(G)$ be the abelian group of polyhedral $m$-chains
with coefficients in $G$.

If $\sum_{i=1}^k g_i \sigma_i$ is a formal polyhedral $m$-chain, we let 
\[
  \sum_{i=1}^k g_i [\sigma_i]
\]
denote the corresponding equivalence class, i.e., the corresponding polyhedral chain.
Every polyhedral chain has a representation $\sum_ig_i[\sigma_i]$ in which
the $\sigma_i$ are non-overlapping.   For such a representation,
the mass (or weighted area) of the chain is given by
\[
 \M\left(\sum g_i[\sigma_i]\right) := \sum |g_i|\,\Hh^m(\sigma_i).
\]

\section{A Canonical Calibration}\label{canonical-section}

In this section, we fix nonnegative integers $m$ and $N$ with $m<N$.
Let $G$ be the additive group $\Lambda_m\RR^N$ of $m$-vectors in $\RR^N$.
We give $\Lambda_m\RR^N$ the Euclidean norm.  That is, we give $\Lambda_m\RR^N$ the norm associated
to the Euclidean structure
for which the $m$-vectors $\ee_{i_1}\wedge \dots \wedge \ee_{i_m} (i_1<i_2<\dots < i_m)$
form an orthonormal basis of $\Lambda_m\RR^N$.

(For the study of non-rectifiable chains, it would probably be better to use the mass norm~\cite{federer-fleming}*{page~461} on $\Lambda_n\RR^N$.
However, in this paper we are primarily interested in rectifiable chains, and 
whether one uses the Euclidean norm or the mass norm does not affect the masses
of rectifiable chains.)

If $\sigma$ is an oriented $m$-dimensional polyhedron in $\RR^N$, we let $\eta(\sigma)$ be the
corresponding simple unit $m$-vector.

Now define a map
\[
   \Phi: \Pp_m^\textnormal{formal}(G) \to \RR
\]
by
\[
   \Phi\left(\sum_i g_i \sigma_i \right) = \sum_i (g_i\cdot \eta(\sigma_i)) \Hh^m{\sigma_i}.
\]
Note that $\Phi$ is an additive homomorphism.

If $\tilde \sigma$ is obtained from $\sigma$ by reversing the orientation, then $\eta(\tilde \sigma)=-\eta(\sigma)$,
so
\[
    (-g)\cdot\eta(\tilde \sigma) = g \cdot \sigma,
\]
and therefore 
\[
   \Phi(-g\tilde \sigma) = \Phi(g\sigma).
\]
Similarly, 
\[
   \Phi(g\sigma) = \Phi(g\sigma') + \Phi(g\sigma'')
\]
if $\sigma'$ and $\sigma''$ are obtained by subdividing $\sigma$.

Hence $\Phi$ induces a well-defined homomorphism
\begin{align*}
&\Phi: \Pp_m(G) \to \RR, \\
&\Phi\left(\sum_i g_i[\sigma_i]\right) = \sum (g_i\cdot \eta(\sigma_i)) \Hh^m(\sigma_i).
\end{align*}

\begin{theorem}\label{calibration-criterion}
Suppose that $G=\Lambda_m\RR^N$ and that
\[
  A = \sum g_i [\sigma_i]
\]
is a polyhedral chain in $\Pp_m(G)$, 
where the $\sigma_i$ are non-overlapping.
Then
\[
   \Phi(A)\le \M(A),
\]
with equality if and only if each $g_i$ is a nonnegative scalar multiple of $\eta(\sigma_i)$.
\end{theorem}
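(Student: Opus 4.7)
The plan is to prove the inequality termwise and reduce it to Cauchy--Schwarz in $\Lambda_m\RR^N$. Since the $\sigma_i$ are non-overlapping, both sides decompose as sums over $i$:
\[
   \Phi(A) = \sum_i \bigl(g_i\cdot \eta(\sigma_i)\bigr)\,\Hh^m(\sigma_i), \qquad
   \M(A) = \sum_i |g_i|\,\Hh^m(\sigma_i),
\]
so it suffices to show, for each $i$, that $g_i\cdot \eta(\sigma_i)\le |g_i|$, with equality iff $g_i$ is a nonnegative multiple of $\eta(\sigma_i)$.

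The key observation is that $\eta(\sigma_i)$, being the simple unit $m$-vector associated to the oriented polyhedron $\sigma_i$, has Euclidean norm $1$ in $\Lambda_m\RR^N$. Indeed, if $v_1,\dots,v_m$ is an orthonormal positively oriented basis of the affine $m$-plane containing $\sigma_i$, then $\eta(\sigma_i)=v_1\wedge\cdots\wedge v_m$, and expanding in the orthonormal basis $\{\ee_{i_1}\wedge\cdots\wedge \ee_{i_m}\}$ shows this wedge is a unit vector for the chosen Euclidean structure on $\Lambda_m\RR^N$.

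Given that, the Cauchy--Schwarz inequality in the Euclidean space $\Lambda_m\RR^N$ gives
\[
   g_i\cdot \eta(\sigma_i) \le |g_i|\,|\eta(\sigma_i)| = |g_i|,
\]
with equality precisely when $g_i = \lambda\,\eta(\sigma_i)$ for some $\lambda\ge 0$. Multiplying by the nonnegative factor $\Hh^m(\sigma_i)$ and summing over $i$ yields $\Phi(A)\le \M(A)$. Equality in the sum forces equality in each term with $\Hh^m(\sigma_i)>0$, giving the stated characterization of calibrated chains.

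There is no real obstacle here; the only thing to be careful about is that the decomposition used to compute $\M(A)$ is exactly the one in the hypothesis (the non-overlapping representation), which is precisely what the statement provides, and that $\Phi$ is well-defined on $\Pp_m(G)$ so that we may evaluate it on this particular representative.
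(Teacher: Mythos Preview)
Your proof is correct and follows the same approach as the paper: apply Cauchy--Schwarz termwise using $|\eta(\sigma_i)|=1$, then sum and use the non-overlapping hypothesis to identify the right-hand side with $\M(A)$. The paper's argument is essentially identical, just more terse.
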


\begin{proof}
By Cauchy-Schwartz, $g_i\cdot\eta(\sigma_i)\le |g_i|$, with equality if and only if $g_i$
is a nonnegative scalar multiple of $\eta(\sigma_i)$.  Hence
\begin{align*}
\Phi(A) 
&= \sum (g_i\cdot\eta(\sigma_i)) \Hh^m(\sigma_i)  \\
&\le \sum |g_i| \Hh^m(\sigma_i) \\
&= \M(A).
\end{align*}
with equality if and only if each $g_i$ is a nonnegative scalar multiple of $\eta(\sigma_i)$.
\end{proof}

\begin{theorem}\label{stokes-theorem}
Suppose that $G=\Lambda_m\RR^N$ and that $A\in \Pp_m(G)$ is the boundary of an $(m+1)$-chain. 
 Then $\Phi(A)=0$.
\end{theorem}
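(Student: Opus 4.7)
The plan is to reduce to a single polyhedron via linearity and then invoke the classical ``vector area vanishes'' identity, which in turn is a one-line application of Stokes' theorem for constant forms. Since $\partial$ and $\Phi$ are both additive homomorphisms and every $(m+1)$-chain is a finite sum $\sum_k h_k[\tau_k]$, it suffices to establish $\Phi(\partial(h[\tau])) = 0$ for an arbitrary oriented $(m+1)$-polyhedron $\tau \subset \RR^N$ and a single coefficient $h \in G = \Lambda_m \RR^N$.

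Next, I would expand $\partial[\tau] = \sum_j [\sigma_j]$, where the $\sigma_j$ are the $m$-dimensional faces of $\tau$ equipped with the induced (outward) boundary orientations. Then by definition of $\Phi$,
\[
\Phi(\partial(h[\tau])) \;=\; \sum_j (h \cdot \eta(\sigma_j))\, \Hh^m(\sigma_j) \;=\; h \cdot V,
\]
where $V := \sum_j \eta(\sigma_j)\, \Hh^m(\sigma_j) \in \Lambda_m \RR^N$. Thus it is enough to show that $V = 0$.

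To prove $V=0$, I would test it against an arbitrary constant $m$-covector $\omega \in \Lambda^m \RR^N$, viewed as a closed (indeed parallel) $m$-form on $\RR^N$ with $d\omega = 0$. Applying Stokes' theorem to $\tau$,
\[
0 \;=\; \int_\tau d\omega \;=\; \int_{\partial \tau} \omega \;=\; \sum_j \omega(\eta(\sigma_j))\, \Hh^m(\sigma_j) \;=\; \omega(V).
\]
Since this vanishes for every $\omega$, we conclude $V = 0$ and hence $\Phi(\partial(h[\tau])) = h \cdot 0 = 0$, finishing the reduction.

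The main potential obstacle is purely bookkeeping: one must verify that the algebraic boundary of the polyhedral chain $[\tau]$, as defined by the equivalence relation in \S3, coincides with the geometric $\partial \tau$ equipped with the outward-normal orientation, so that Stokes' theorem applies directly to give the identity above. If one wishes to avoid even invoking Stokes, an equivalent elementary route is to triangulate $\tau$ into $(m+1)$-simplices; the contributions from the interior faces cancel in pairs (opposite orientations, identical $\Hh^m$ measure, identical $h \cdot \eta$ up to sign), so $V$ is unchanged by refinement, and one is reduced to a single simplex where the identity is a direct computation from the fact that the alternating sum of the face $m$-vectors of a simplex is zero.
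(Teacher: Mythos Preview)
Your proof is correct and follows essentially the same route as the paper: reduce by linearity to the boundary of a single $(m+1)$-polyhedron $\tau$ with one coefficient, and then apply Stokes' theorem for a constant $m$-form on~$\tau$. The only cosmetic difference is that the paper first reduces the coefficient to a \emph{simple} $m$-vector and uses the associated pullback volume form, whereas you factor out $h$ and test the vector area $V$ against an arbitrary constant $\omega$; these are two phrasings of the same computation.
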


\begin{proof}
Let $g\in G$, let $\tau$ be an oriented $(m+1)$-dimensional polyhedron in $E$,
and let $\sigma_1,\dots,\sigma_k$ be the $m$-dimensional faces of $\tau$ with the induced orientations.
It suffices to show that
\begin{equation}\label{zero}
   \Phi\left(\sum_i g [\sigma_i]\right) = 0.
\end{equation}
Since any $m$-vector is a sum of simple $m$-vectors, it suffices to prove~\eqref{zero}
 when $g$ is a simple $m$-vector.
Let $V$ be the oriented $m$-plane associated to $g$.   Let $\omega$ be the volume form on $V$,
$\Pi: \RR^N\to V$ be orthogonal projection, and let $\Omega = \Pi^\#\omega$.
Note that
\[
     \Phi(g[\sigma_i]) = |g| \int_{\sigma_i}\Omega.
\]
Thus
\begin{align*}
\Phi\left(\sum_i g[\sigma_i]\right) 
&= |g|\sum \int_{\sigma_i}\Omega  \\
&= |g| \int_{\partial \tau}\Omega  \\
&= 0
\end{align*}
by Stokes Theorem (since $\Omega$ is constant and therefore $d\Omega=0$.)
\end{proof}

Recall that the {\bf flat norm} of $A\in \Pp_m(G)$ is given by
\[
  \Ff(A) = \inf_{Q\in \Pp_{m+1}(G)} (\M(A + \partial Q) + \M(Q)),
\]
which is trivially $\le \M(A)$.

\begin{theorem}\label{Phi-bound-theorem} If $G=\Lambda_m\RR^N$ and $A\in \Pp_m(G)$, then
\[
  \Phi(A) \le \Ff(A) \le \M(A),
\]
where $\Ff(A)$ is the flat norm of $A$.
\end{theorem}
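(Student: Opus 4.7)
The plan is to use the two main structural properties of $\Phi$ already established: it is a homomorphism that vanishes on boundaries (Theorem \ref{stokes-theorem}) and it is bounded above by mass (Theorem \ref{calibration-criterion}). These are exactly the ingredients that, via the abstract-calibration setup of Section 2, turn $\Phi$ into a lower bound for the flat norm.

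First I would dispose of the right-hand inequality $\Ff(A) \le \M(A)$: this is immediate from the definition of $\Ff$ by choosing $Q = 0$, since $\M(A + \partial 0) + \M(0) = \M(A)$.

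For the left-hand inequality $\Phi(A) \le \Ff(A)$, I would show that $\Phi(A) \le \M(A + \partial Q) + \M(Q)$ for every $Q \in \Pp_{m+1}(G)$, and then take the infimum over $Q$. Fix such a $Q$. Since $\Phi$ is a homomorphism and $\Phi(\partial Q) = 0$ by Theorem \ref{stokes-theorem},
\[
\Phi(A) = \Phi(A) + \Phi(\partial Q) = \Phi(A + \partial Q).
\]
Now $A + \partial Q$ is itself a polyhedral $m$-chain, so by Theorem \ref{calibration-criterion} applied to a non-overlapping representation of $A + \partial Q$,
\[
\Phi(A + \partial Q) \le \M(A + \partial Q) \le \M(A + \partial Q) + \M(Q).
\]
Combining these gives $\Phi(A) \le \M(A + \partial Q) + \M(Q)$, and taking the infimum over $Q$ yields $\Phi(A) \le \Ff(A)$.

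There is essentially no obstacle here; the proof is a three-line bookkeeping argument that is the standard calibration trick recast in the flat-norm language. The only minor point requiring care is that Theorem \ref{calibration-criterion} is stated for a non-overlapping representation, so one must note that every polyhedral chain admits such a representation (already used in the definition of mass). The content of the theorem is really that Theorems \ref{calibration-criterion} and \ref{stokes-theorem} together say $\Phi$ is a generalized calibration in the sense of Section 2, with $\Bb$ the subgroup of boundaries; the flat-norm bound is a quantitative refinement.
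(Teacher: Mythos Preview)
Your proof is correct and matches the paper's argument essentially line for line: use Theorem~\ref{stokes-theorem} to write $\Phi(A)=\Phi(A+\partial Q)$, bound this by $\M(A+\partial Q)$ via Theorem~\ref{calibration-criterion}, add $\M(Q)$, and take the infimum over $Q$. The paper treats $\Ff(A)\le\M(A)$ as already noted (it is remarked just before the theorem), but your inclusion of the $Q=0$ observation is fine.
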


\begin{proof}
Let $Q$ be a polyhedral $(m+1)$-chain.  By Theorem~\ref{stokes-theorem},
\begin{align*}
\Phi(A) 
&= \Phi(A + \partial Q) \\
&\le \M(A+\partial Q) \\
&\le \M(A+\partial Q) + \M(Q).
\end{align*}
Hence 
\[
 \Phi(A) \le \inf_Q (\M(A+\partial Q) + \M(Q)) = \Ff(A).
\]
\end{proof}

\begin{corollary}
The map $\Phi$ extends continuously to an additive homomorphism $\Phi: \Ff_m(G)\to \RR$ such that
\[
  \Phi(A)\le \Ff(A)\le \M(A)
\]
for every flat $m$-chain $A$.
\end{corollary}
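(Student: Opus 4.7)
The key observation is that Theorem~\ref{Phi-bound-theorem} already supplies $\Phi(A) \le \Ff(A)$ for every polyhedral chain $A \in \Pp_m(G)$. Because $\Phi$ is an additive homomorphism and the flat norm satisfies $\Ff(-A) = \Ff(A)$, applying the same bound to $-A$ gives $-\Phi(A) = \Phi(-A) \le \Ff(-A) = \Ff(A)$. Combining the two, $|\Phi(A)| \le \Ff(A)$ on $\Pp_m(G)$. In other words, $\Phi$ is $1$-Lipschitz with respect to the flat norm on the dense subgroup of polyhedral chains.

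The space $\Ff_m(G)$ of flat $m$-chains is, by construction (as in Fleming), the metric completion of $\Pp_m(G)$ with respect to $\Ff$; in particular $\Pp_m(G)$ sits as a dense subgroup of $\Ff_m(G)$. Any uniformly continuous homomorphism from a dense subgroup of a complete metric abelian group into $\RR$ extends uniquely to a continuous homomorphism on the whole group, with the Lipschitz constant preserved. Applying this standard extension principle to $\Phi$ produces the required additive homomorphism $\Phi : \Ff_m(G) \to \RR$ with $|\Phi(A)| \le \Ff(A)$ for all $A \in \Ff_m(G)$; in particular $\Phi(A) \le \Ff(A)$.

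For the second inequality $\Ff(A) \le \M(A)$, one argues as follows: on polyhedral chains it is immediate by taking $Q = 0$ in the infimum defining $\Ff$. On general flat chains the mass is defined as the lower-semicontinuous extension of $\M$ from $\Pp_m(G)$, so given a flat-convergent sequence of polyhedral approximations $A_n \to A$ with $\M(A_n) \to \M(A)$ (or $\liminf$, in case $\M(A) = +\infty$ the inequality is trivial), we have $\Ff(A) = \lim \Ff(A_n) \le \liminf \M(A_n) = \M(A)$ since $\Ff$ is continuous with respect to itself.

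The only step that might appear to require care is the density of $\Pp_m(G)$ in $\Ff_m(G)$, but this is a defining feature of Fleming's flat chain group and is not a genuine obstacle; everything else in the proof is the standard completion-of-a-Lipschitz-map routine.
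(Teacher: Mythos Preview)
Your proof is correct and follows essentially the same approach as the paper, which simply observes that the corollary is immediate from Theorem~\ref{Phi-bound-theorem} because $\Ff_m(G)$ is the metric completion of $\Pp_m(G)$ with respect to the flat norm. You have just spelled out the standard Lipschitz-extension-to-the-completion argument (including the observation $|\Phi(A)|\le \Ff(A)$ and the lower-semicontinuous extension of mass) that the paper leaves implicit.
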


The corollary follows immediately from the theorem since $\Ff_m(G)$ is the metric space
completion of $\Pp_m(G)$ with respect the flat norm.

\section{Polyhedral Varifolds}

If $\sigma$ is an $m$-dimensional polyhedron in $\RR^N$, we let $\Var(\sigma)$
 be the associated $m$-dimensional, multiplicity-$1$
rectifiable varifold, i.e., the rectifiable varifold whose associated Radon measure is $\Hh^m\llcorner \sigma$.
An $m$-dimensional {\bf polyhedral varifold} in $\RR^N$ is a varifold of the form
\[
   \sum_{i=1}^k c_i \Var(\sigma_i),
\]
where each $\sigma_i$ is a polyhedron and each $c_i\ge 0$.  By subdividing, we can assume that 
if $i\ne j$, then $\sigma_i\cap\sigma_j$ is either empty or is a common face of $\sigma_i$ and $\sigma_j$
of dimension $<m$.

As in \S\ref{canonical-section}, we let $G=\Lambda_m\RR^N$.  
If $\sigma$ is an oriented $m$-dimensional polyhedron in $\RR^N$, we let $\eta(\sigma)$
be the simple unit $m$-vector associated with the orientation.   Let $\left<\sigma\right>$ be the polyhedral
$m$-chain in $\Pp_m(G)$ given by
\[
   \left<\sigma\right>  =  \eta(\sigma) [\sigma].
\]
Note that if $\tilde \sigma$ is obtained from $\sigma$ by reversing the orientation, then 
$\<\tilde \sigma\>=\<\sigma\>$.   Thus given an unoriented $m$-dimensional polyhedron $\sigma$, 
we have a well-defined polyhedral $m$-chain $\<\sigma\>$ in $\Pp_m(G)$.
 
If
\[
  V = \sum_{i=1}^k c_i \Var(\sigma_i)
\]
is an $m$-dimensional polyhedral varifold, we let
\[
  \<V\> = \sum_{i=1}^k c_i \<\sigma_i\>.
\]
If we give the $\sigma_i$ orientations, then
\[
   \<V\> = \sum_{i=1}^k c_i \eta(\sigma_i)[\sigma_i],
\]
from which it follows (by Theorem~\ref{calibration-criterion}) that $V$ is calibrated by $\Phi$.
Note that $\M(\<V\>)=\M(V)$.
Thus $\<\,\cdot\,\>$ is a mass-preserving homomorphism from the additive semigroup of $m$-dimensional polyhedral 
varifolds to the additive group $\Pp_m(G)$ of $m$-dimensional polyhedral chains.

\begin{theorem}\label{boundary-support}
Let $V$ be an $m$-dimensional polyhedral varifold in $\RR^N$, and let
$\Gamma$ be the union of some of the $(m-1)$-dimensional faces of polyhedra in $V$.
Then $V$ is stationary in $\RR^N\setminus \Gamma$ if and only if $\partial \<V\>$ is supported in $\Gamma$.
\end{theorem}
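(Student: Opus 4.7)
The plan is to compare the two conditions face-by-face on the $(m-1)$-dimensional faces of the polyhedra in the decomposition of $V$. After subdividing if necessary, I write $V = \sum_{i=1}^k c_i \Var(\sigma_i)$ with the $\sigma_i$ meeting only along common lower-dimensional faces, choose orientations on each $\sigma_i$, and form $\<V\> = \sum_i c_i \eta(\sigma_i)[\sigma_i]$. For each $(m-1)$-face $\tau$ of some $\sigma_i$, pick an orientation with associated simple unit $(m-1)$-vector $\eta_\tau$; then the coefficient of $[\tau]$ in $\partial \<V\>$ is
\[
\sum_{i:\,\tau\subset \sigma_i} \epsilon_i\, c_i\, \eta(\sigma_i) \ \in\ \Lambda_m \RR^N,
\]
where $\epsilon_i = \pm 1$ records whether the induced boundary orientation of $\tau$ from $\sigma_i$ agrees with $\eta_\tau$.

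The key algebraic identity linking the two pictures is
\[
\epsilon_i\, \eta(\sigma_i) \ =\ \nu_i \wedge \eta_\tau,
\]
where $\nu_i \in \RR^N$ is the unit outward conormal at $\tau$ lying in the $m$-plane of $\sigma_i$ (i.e.\ tangent to $\sigma_i$, orthogonal to $\tau$, pointing out of $\sigma_i$). This is just the standard compatibility between induced boundary orientation and the wedge product. Substituting, the coefficient of $[\tau]$ in $\partial\<V\>$ becomes $w_\tau \wedge \eta_\tau$, where
\[
w_\tau \ :=\ \sum_{i:\,\tau\subset\sigma_i} c_i\, \nu_i \ \in\ \RR^N.
\]
Since each $\nu_i$ is orthogonal in $\RR^N$ to the linear span of $\tau$, so is $w_\tau$; therefore $w_\tau \wedge \eta_\tau = 0$ in $\Lambda_m\RR^N$ if and only if $w_\tau = 0$. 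Consequently, $\partial\<V\>$ is supported in $\Gamma$ if and only if $w_\tau = 0$ for every $(m-1)$-face $\tau$ not contained in $\Gamma$.

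On the variational side, applying the divergence theorem on each flat piece $\sigma_i$ gives, for any smooth vector field $X$ compactly supported in $\RR^N \setminus \Gamma$,
\[
\delta V(X) \ =\ \sum_i c_i \int_{\partial\sigma_i} X \cdot \nu_i \, d\Hh^{m-1} \ =\ \sum_{\tau\,\not\subset\,\Gamma} \int_\tau X \cdot w_\tau \, d\Hh^{m-1}.
\]
Varying $X$ shows $V$ is stationary in $\RR^N\setminus \Gamma$ if and only if $w_\tau = 0$ at every face $\tau \not\subset \Gamma$, which by the previous paragraph is exactly the condition that $\partial \<V\>$ is supported in $\Gamma$.

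The main obstacle is the careful bookkeeping of the orientation identity $\epsilon_i \eta(\sigma_i) = \nu_i \wedge \eta_\tau$, which is routine but sign-sensitive. Once that is in hand, the observation that $w_\tau$ lies in the orthogonal complement of the span of $\tau$ (so that the test map $w \mapsto w\wedge \eta_\tau$ is injective on the relevant subspace) makes the algebraic balancing condition in $\Lambda_m\RR^N$ and the geometric balancing condition in $\RR^N$ line up exactly.
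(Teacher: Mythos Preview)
Your proof is correct and follows essentially the same approach as the paper: both reduce to a face-by-face balancing condition, use the wedge identity relating $\eta(\sigma_i)$, $\eta_\tau$, and the outward conormal $\nu_i$, and then observe that the vanishing of the $\Lambda_m\RR^N$-coefficient of $[\tau]$ in $\partial\<V\>$ is equivalent to $\sum_i c_i\nu_i=0$, which is precisely the first-variation balancing condition. The only cosmetic difference is that the paper chooses the orientations on the $\sigma_i$ compatibly with a fixed orientation on $\tau$ (so your signs $\epsilon_i$ are all $+1$), whereas you keep arbitrary orientations and track the $\epsilon_i$ explicitly.
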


\begin{proof}
Since the result is essentially local, it suffices to prove it for 
\[
   V = \sum_i c_i\Var(\sigma_i)
\]
where the $\sigma_i$ are polyhedra with a common $(m-1)$-dimensional face $\tau$
and where $\Gamma$ is the union of the faces $\ne \tau$ of the various $\sigma_i$.

Give $\tau$ an orientation, and then give each $\sigma_i$ the orientation that induces the chosen
orientation on $\tau$.   
Let $\eta(\tau)$ be the simple unit $(m-1)$-vector associated to the orientation of $\tau$
and let $\eta(\sigma_i)$ be the simple unit $m$-vector associated with the orientation of $\sigma_i$.
Then
\begin{equation}\label{nu-equation}
    \eta(\sigma_i) = \eta(\tau)\wedge \nu_i, 
\end{equation}
where $\nu_i$ is the unit normal to $\tau$ that lies in the $m$-plane containing $\sigma_i$ and that points
out from $\sigma_i$.

Note that
\begin{equation}\label{stationary-equation}
\begin{gathered}
\text{$V$ is stationary in $\Gamma^c$} \\
\iff 
\sum c_i \nu_i = 0 \\
\iff
\sum c_i\eta(\sigma_i)=0.
\end{gathered}
\end{equation}
On the other hand, 
\begin{align*}
( \partial \<\Var(\sigma_i)\> ) \llcorner \Gamma^c
&=
(\partial \eta(\sigma_i) [\sigma_i])\llcorner \Gamma^c  \\
&=
\eta(\sigma_i) [\tau],
\end{align*}
so
\begin{equation}\label{boundary-equation}
 ( \partial \<V\> ) \llcorner \Gamma^c
 =
 \left( \sum c_i \eta(\sigma_i) \right) [\tau].
\end{equation}
The theorem follows immediately from~\eqref{stationary-equation} and~\eqref{boundary-equation}.
 
\end{proof}

\section{The Main Theorem}\label{main-section}

\begin{theorem}\label{main-theorem}
Let $G=\Lambda_m\RR^N$.
If $V$ is an $m$-dimensional polyhedral varifold, then $\<V\>$ is a mass-minimizing polyhedral chain in 
$\Pp_m(G)$.   If $\Gamma$ is closed set (such as the union of some of the faces of polyhedra in $V$)
and if $V$ is stationary in $\Gamma^c$, then $\partial \<V\>$ is supported in $\Gamma$ and
\[
  \M(V) \le \M(\phi_\#V)
\]
for any lipschitz homeomorphism $\phi: \RR^N\to\RR^N$ such that $\phi(x)=x$ for all $x\in \Gamma$.
\end{theorem}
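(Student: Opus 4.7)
The plan is to apply the canonical calibration $\Phi$ from Section~\ref{canonical-section}, extended continuously to all flat $m$-chains. Writing $\<V\> = \sum_i c_i\eta(\sigma_i)[\sigma_i]$ with the $\sigma_i$ non-overlapping and each $c_i \ge 0$, each coefficient $c_i\eta(\sigma_i)$ is a nonnegative scalar multiple of $\eta(\sigma_i)$, so Theorem~\ref{calibration-criterion} gives $\Phi(\<V\>) = \M(\<V\>) = \M(V)$. For any flat chain $A'$ with $\partial A' = \partial \<V\>$, the difference $A' - \<V\>$ is a compactly supported flat $m$-cycle; since $m < N$, a cone construction from any point produces an $(m+1)$-chain whose boundary is $A' - \<V\>$. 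Hence by the flat extension of Theorem~\ref{stokes-theorem}, $\Phi(A' - \<V\>) = 0$, so $\Phi(A') = \Phi(\<V\>)$. Combined with $\Phi(A') \le \M(A')$ from Theorem~\ref{Phi-bound-theorem}, this yields $\M(\<V\>) \le \M(A')$, proving the first assertion.

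For the deformation inequality, I would first invoke Theorem~\ref{boundary-support} to conclude that $\partial\<V\>$ is supported in $\Gamma$. Given a Lipschitz homeomorphism $\phi$ fixing $\Gamma$, push $\<V\>$ forward as a flat chain. Since $\phi$ is the identity on $\spt(\partial\<V\>) \subset \Gamma$ and pushforward commutes with boundary,
\[
  \partial(\phi_\#\<V\>) = \phi_\#(\partial\<V\>) = \partial\<V\>.
\]
Thus $\phi_\#\<V\>$ has the same boundary as $\<V\>$, and the first part (applied to flat competitors) gives $\M(\<V\>) \le \M(\phi_\#\<V\>)$. Because $\phi$ is injective and the $G$-norm of the constant coefficient $c_i\eta(\sigma_i)$ on each piece is $c_i$, the area formula delivers
\[
  \M(\phi_\#\<V\>) = \sum_i c_i \Hh^m(\phi(\sigma_i)) = \M(\phi_\# V),
\]
and chaining the inequalities produces $\M(V) \le \M(\phi_\# V)$.

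The main obstacle will be justifying the identity $\M(\phi_\#\<V\>) = \M(\phi_\# V)$ in the last step. In Fleming's theory of $G$-chains the abstract coefficient is preserved under pushforward, so the mass scales only by the tangential Jacobian $J^{\sigma_i}\phi$, matching exactly the varifold pushforward mass --- but this agreement relies essentially on $\phi$ being a \emph{homeomorphism}. Without injectivity, pieces of $\phi_\#\<V\>$ carrying opposing $G$-coefficients could cancel after projection, producing a strict inequality $\M(\phi_\#\<V\>) < \M(\phi_\# V)$ and breaking the argument; this is precisely the reason the hypothesis asks for a homeomorphism rather than an arbitrary Lipschitz map.
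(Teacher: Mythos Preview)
Your proof is correct and follows essentially the same route as the paper's: both use that $\<V\>$ is calibrated by $\Phi$ to obtain mass-minimality among all flat chains with the same boundary, then invoke Theorem~\ref{boundary-support} and the identity $\M(\phi_\#\<V\>)=\M(\phi_\#V)$ for injective $\phi$ to deduce the deformation inequality. Your write-up is simply more explicit where the paper is terse (the cone construction unpacking ``$\RR^N$ is homologically trivial'', the area-formula justification of the mass identity, and the reason injectivity is needed); the one small point to tighten is that $A'-\<V\>$ need not be compactly supported, but the coning argument for finite-mass flat cycles in $\RR^N$ still goes through.
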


\begin{proof}
By construction, $\<V\>$ is calibrated by $\Phi$.
Hence $\<V\>$ is homologically mass-minimizing.  Since the ambient space $\RR^N$ is homologically
trivial, that means that $\<V\>$ minimizes mass, i.e., that $\M(\<V\>)\le \M(A)$ for any flat
chain $A$ in $\Ff_m(G)$ with $\partial A=\partial \<V\>$.

Now suppose that $V$ is stationary in $\Gamma^c$.  Then $\partial \<V\>$ is supported in $\Gamma$
(by Theorem~\ref{boundary-support}),
so 
\[
   \M(\<V\>) \le \M( f_\# \<V\>)
\]
for any Lipschitz map $f:\RR^N\to\RR^N$ such that $f(x)=x$ for all $x\in \Gamma$.
Now $\M(\<V\>)=\M(V)$, and, if $f$ is one-to-one, then $\M(f_\#\<V\>) = \M(f_\#V)$.
Thus 
\[
   \M(V) \le \M(f_\#V).
\]
\end{proof}

\section{Changing the Coefficient Group}\label{changing-section}

\begin{theorem}\label{changing-theorem}
Let $G$ be an abelian group with norm $|\cdot|_G$.
Let $S=\{g_1,\dots,g_k\}$ be a finite subset of $G$, and let $H$ be the subgroup of $G$ generated by $S$.
Define a norm $|\cdot|_H$ on $H$ by
\[
  |g|_H = \min \left\{ \sum_{i=1}^k |n_i|\,|g_i|:  g = \sum_{i=1}^k n_i g_i \right\},
\]
where the $n_i$ are integers.
Suppose that $A$ is a rectifiable flat chain in $\Ff_m(G)$ and that all of its multiplicities
are in $S$.  Then $A$ may also be regarded as a rectifiable flat chain in $\Ff_m(H)$
(with the same mass).   Furthermore, if $A$ is mass-minimizing in $\Ff_m(G)$, then
it is also mass-minimizing in $\Ff_m(H)$.
\end{theorem}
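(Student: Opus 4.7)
The plan is to reduce the statement to the standard fact that a $1$-Lipschitz homomorphism of normed coefficient groups induces a continuous, boundary-commuting, mass-non-increasing homomorphism on the corresponding groups of flat chains (part of Fleming's basic setup). First I would check that $|\cdot|_H$ is genuinely a norm on $H$: symmetry is immediate, subadditivity follows by concatenating representations, and the implication $|g|_H = 0 \Rightarrow g=0$ is a consequence of the estimate $|g|_G \le |g|_H$, which in turn is just the $G$-triangle inequality applied to any representation $g = \sum n_i g_i$. Conversely, the trivial representation $g_j = 1\cdot g_j$ gives $|g_j|_H \le |g_j|_G$ for each $g_j \in S$, so the two norms in fact coincide on $S$.

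Next, let $\iota\colon H \hookrightarrow G$ denote the inclusion. By the preceding estimate it is $1$-Lipschitz, so it induces a homomorphism $\iota_*\colon \Ff_m(H) \to \Ff_m(G)$ defined on polyhedral chains by $\iota_*\bigl(\sum h_i[\sigma_i]\bigr) = \sum \iota(h_i)[\sigma_i]$ and extended continuously in the flat norm; this map commutes with $\partial$ and satisfies $\M_G(\iota_* B) \le \M_H(B)$ for every $B \in \Ff_m(H)$. Because the multiplicities of the rectifiable chain $A$ lie in $S \subseteq H$, there is a rectifiable chain $\tilde A \in \Ff_m(H)$ with the same underlying rectifiable set, orientations, and multiplicity function (now interpreted in $H$), satisfying $\iota_* \tilde A = A$; the equality of norms on $S$ yields $\M_H(\tilde A) = \M_G(A)$, which proves the first assertion.

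For the mass-minimality transfer, suppose $B \in \Ff_m(H)$ satisfies $\partial B = \partial \tilde A$. Then $\iota_* B \in \Ff_m(G)$ satisfies $\partial(\iota_* B) = \iota_*(\partial \tilde A) = \partial A$, so by the hypothesis that $A$ is mass-minimizing in $\Ff_m(G)$,
\[
\M_H(\tilde A) = \M_G(A) \le \M_G(\iota_* B) \le \M_H(B),
\]
which gives the mass-minimality of $\tilde A$ in $\Ff_m(H)$.

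The main delicate point is really bookkeeping: one must distinguish $A \in \Ff_m(G)$ from its lift $\tilde A \in \Ff_m(H)$, and verify that their masses agree. That equality rests on the coincidence $|g_j|_H = |g_j|_G$ for $g_j \in S$, which is precisely what forces the definition of $|\cdot|_H$ to be the minimum over representations: taking the minimum produces the largest norm on $H$ that both dominates $|\cdot|_G$ (making $\iota_*$ mass-non-increasing) and agrees with $|\cdot|_G$ on $S$ (making the lift $\tilde A$ mass-preserving).
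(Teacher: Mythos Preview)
Your proof is correct and follows essentially the same route as the paper's: use that $|g|_G \le |g|_H$ (so the inclusion $H\hookrightarrow G$ is $1$-Lipschitz and induces a mass-non-increasing map on flat chains) together with $|g_j|_H = |g_j|_G$ on $S$ (so $A$ and its lift have equal mass), and then chain the inequalities. You are somewhat more explicit than the paper in verifying that $|\cdot|_H$ is a norm and in distinguishing $A$ from $\tilde A$, but the argument is the same.
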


Note that $\{g\in H: |g|_H\le \lambda\}$ is finite if $\lambda<\infty$.
Note also that $|\cdot|_H$ is the largest norm on $H$ such that $|g_i|_H=|g_i|_G$
for each of the generators $g_1,\dots,g_k$.

\begin{proof}
Let $\M_G$ and $\Ff_G$ denote mass and flat norm on chains (polyhedral or flat) with
coefficients in the group $G$ with the norm $|\cdot|_G$.   Let $\M_H$ and $\Ff_H$ denote mass and flat norm
on chains with coefficients in the group $H$ with the norm $|\cdot|_H$.
Since $\M_H\ge \M_G$ on $\Pp_m(H)$, it follows that 
  $\Ff_H\ge \Ff_G$ on $\Pp_m(H)$.  
Consequently,  
the inclusion
\[
\Pp_m(H) \subset   \Pp_m(G)
\]
extends to an inclusion
\[
  \Ff_m(H) \subset \Ff_m(G)
\]
such that $\M_G(T)\le \M_H(T)$ and $\Ff_G(T)\le \Ff_H(T)$ for $T\in \Ff_m(H)$.

Now suppose that $A$ is an $\M_G$-minimizing rectifiable chain in $\Ff_m(G)$ and that the multiplicities
of $A$ lie in $S$.   Then $A$ is also a rectifiable chain in $\Ff_m(H)$, and $\M_H(A)=\M_G(A)$.
If $A'$ is a chain in $\Ff_m(H)$ with $\partial A'=\partial A$, then
\[
   \M_G(A)\le\M_G(A')
\]
since $A$ is $\M_G$-minimizing. 
Since $\M_G(A)=\M_H(A)$ and $\M_G(A')\le \M_H(A')$, it follows
that
\[
   \M_H(A) \le \M_H(A').
\]
Hence $A$ is $\M_H$-minimizing in $\Ff_m(H)$.
\end{proof}

\nocite{pedrosa-ritore}
\nocite{hoffman-wei}
\newcommand{\hide}[1]{}

\begin{bibdiv}

\begin{biblist}

\bib{allard}{article}{
   author={Allard, William K.},
   title={On the first variation of a varifold},
   journal={Ann. of Math. (2)},
   volume={95},
   date={1972},
   pages={417--491},
   issn={0003-486X},
   review={\MR{307015}},
   doi={10.2307/1970868},
}

\bib{allard-almgren}{article}{
   author={Allard, W. K.},
   author={Almgren, F. J., Jr.},
   title={The structure of stationary one dimensional varifolds with
   positive density},
   journal={Invent. Math.},
   volume={34},
   date={1976},
   number={2},
   pages={83--97},
   issn={0020-9910},
   review={\MR{425741}},
   doi={10.1007/BF01425476},
}

\bib{choe}{article}{
   author={Choe, Jaigyoung},
   title={Every stationary polyhedral set in ${\bf R}^n$ is area minimizing
   under diffeomorphisms},
   journal={Pacific J. Math.},
   volume={175},
   date={1996},
   number={2},
   pages={439--446},
   issn={0030-8730},
   review={\MR{1432839}},
}

\bib{federer-fleming}{article}{
   author={Federer, Herbert},
   author={Fleming, Wendell H.},
   title={Normal and integral currents},
   journal={Ann. of Math. (2)},
   volume={72},
   date={1960},
   pages={458--520},
   issn={0003-486X},
   review={\MR{123260}},
   doi={10.2307/1970227},
}

\bib{fleming}{article}{
   author={Fleming, Wendell H.},
   title={Flat chains over a finite coefficient group},
   journal={Trans. Amer. Math. Soc.},
   volume={121},
   date={1966},
   pages={160--186},
   issn={0002-9947},
   review={\MR{185084}},
   doi={10.2307/1994337},
}

\bib{morgan}{article}{
   author={Morgan, Frank},
   title={Clusters with multiplicities in $\Bbb R^2$},
   journal={Pacific J. Math.},
   volume={221},
   date={2005},
   number={1},
   pages={123--146},
   issn={0030-8730},
   review={\MR{2194148}},
   doi={10.2140/pjm.2005.221.123},
}

\bib{simon-book}{book}{
   author={Simon, Leon},
   title={Lectures on geometric measure theory},
   series={Proceedings of the Centre for Mathematical Analysis, Australian
   National University},
   volume={3},
   publisher={Australian National University, Centre for Mathematical
   Analysis, Canberra},
   date={1983},
   pages={vii+272},
   isbn={0-86784-429-9},
   review={\MR{756417}},
}

\bib{taylor-soap}{article}{
   author={Taylor, Jean E.},
   title={The structure of singularities in solutions to ellipsoidal
   variational problems with constraints in ${\rm R}^{3}$},
   journal={Ann. of Math. (2)},
   volume={103},
   date={1976},
   number={3},
   pages={541--546},
   issn={0003-486X},
   review={\MR{428182}},
   doi={10.2307/1970950},
}

\bib{white-fluids}{article}{
   author={White, Brian},
   title={Existence of least-energy configurations of immiscible fluids},
   journal={J. Geom. Anal.},
   volume={6},
   date={1996},
   number={1},
   pages={151--161},
   issn={1050-6926},
   review={\MR{1402391}},
   doi={10.1007/BF02921571},
}

\bib{white-rectifiability}{article}{
   author={White, Brian},
   title={Rectifiability of flat chains},
   journal={Ann. of Math. (2)},
   volume={150},
   date={1999},
   number={1},
   pages={165--184},
   issn={0003-486X},
   review={\MR{1715323}},
   doi={10.2307/121100},
}

\end{biblist}

\end{bibdiv}

\end{document}